 \numberwithin{equation}{section}
\newtheorem{Theorem}{Theorem}[section]
\newtheorem{Lemma}{Lemma}[section]
\newtheorem{Corollary}{Corollary}[section]
\newtheorem{Proposition}{Proposition}[section]
\theoremstyle{definition}
\theoremstyle{remark} \theoremstyle{example}
\newtheorem{Remark}{Remark}[section]
\begin{document}\numberwithin{equation}{section}

\title [ Kropina metrics with isotropic scalar curvature]{Kropina metrics with isotropic scalar curvature }

\author{ Liulin Liu, Xiaoling Zhang* and Lili Zhao}

\begin{abstract}
In this paper, we study Kropina metrics with isotropic scalar curvature. First, we obtain the expressions of Ricci curvature tensor and scalar curvature. Then, we characterize the Kropina metrics with isotropic scalar curvature on by tensor analysis.
\end{abstract}
 \maketitle

\footnote{
Key words and phrases: Kropina metrics; Ricci curvature tensor; scalar curvature\\
Mathematics Subject Classification: 53C30, 53C60.\\
Supported partially by the National Natural Science Foundation of China (Grant Nos. 11961061, 11461064, 12071283).\\
*corresponding author.}

\section{introduction}
The curvature properties of metrics play very important roles in Riemannian and Finsler geometry. Riemannian curvature and Ricci curvature are the most important Riemannian geometric quantities in Finsler geometry.
 In 1988, the concept of Ricci curvature was first proposed by Akbar-Zadeh and its tensor form can be naturally obtained \cite{AZ}.
 In recent years, many scholars have done a lot of research on them.
 Cheng-Shen-Tian proved that polynomial $(\alpha,\beta)$-metric is an Einstein metric if and only if it is Ricci-flat \cite{CST}.
 Zhang-Shen gave the expression of Ricci curvature of Kropina metric. And they proved that a non-Riemannian Kropina metric with constant Killing form $\beta$ is an Einstein metric if and only if $\alpha$ is also an Einstein metric \cite{ZXL}.
 By using navigation date $(h, W)$, they proved that $n$$(\geq 2)$-dimensional Kropina metric is an Einstein metric if and only if Riemann metric $h$ is an Einstein metric and $W$ is a Killing vector field about $h$.
 Xia gave the expression for the Riemannian curvature of Kropina metrics and proved that Kropina metric is an Einstein metric if and only if it is of a non-negative constant flag curvature \cite{XQL}.
 Cheng-Ma-Shen studied and characterized projective Ricci flat Kropina metrics and obtained its equivalent characterization equation \cite{CMS}.

Unlike the notion of Riemannian curvature, there is no unified definition of scalar curvature in Finsler geometry, although several geometers have offered several versions of the definition of the Ricci curvature tensor\cite{AZ,L-S,16,25}. In 2015, Li-Shen introduced a new definition of the Ricci curvature tensor \cite{L-S}. This tensor is symmetric. They proved that for a Finsler metric $F$, it is of isotropic Ricci curvature tensor if and only if it is of isotropic Ricci curvature and $\chi$-curvature tensor satisfies $\chi_i=f_{ij}(x)y^j$, where $f_ {ij}+f_{ji}=0$. It is further proved that for Randers metrics, it is of isotropic Ricci curvature tensor if and only if it is of isotropic Ricci curvature.

In Finsler geometry, there are several versions of the definition of scalar curvature. We used Akbar-Zadeh's definition \cite{AZ} of the scalar curvature, based on the Li-Shen's definition of the (symmetric) Ricci curvature tensor \cite{L-S}. For a Finsler metric $F$ on an $n$-dimensional manifold $M$, the scalar curvature $R$ of $F$ is defined as $R:=g^{ij}Ric_{ij}$.
Tayebi studied general fourth-root metrics \cite{T}. They proved the necessary and sufficient condition for general fourth-root metrics with isotropic scalar curvature and found the necessary and sufficient condition for general fourth-root metrics with isotropic scalar curvature under conformal variation. Finally, they characterized Bryant metric with isotropic scalar curvature.
Chen-Xia studied a conformally flat $(\alpha,\beta)$-metric with weakly isotropic scalar curvature \cite{CX}. They proved that if  conformally flat polynomial $(\alpha,\beta)$-metrics have weakly isotropic scalar curvature $R$, then $R$ vanishes.

In this paper, we obtain the necessary and sufficient condition of Kropina metrics with isotropic scalar curvature and have the following results.
\begin{Theorem}\label{miR}
Let $F$ be a Kropina metric on an $n$$(\geq3)$-dimensional manifold $M$. Then $F$ is of isotropic scalar curvature if and only if
\begin{equation}\label{condition1}
\left\{
\begin{array}{lcl}
\displaystyle b^kb^l{}^\alpha Ric_{kl}=\frac{b^2}{2(n-1)}{}^\alpha R-\frac{(n-2)}{2(n-1)}s^t_{\;m}s^m_{\;t}-\frac{(n-2)(n+1)}{2(n-1)b^2}s^ms_m-\frac{(n-2)}{2}(2c_b-c^2),\vspace{1ex}\\
\displaystyle r_{00}=c(x)\alpha^2,\vspace{1ex}\\
\displaystyle f\alpha^2=-b^4{\;}^\alpha Ric+(n-2)[c^2\beta^2+(2cs_0-b^2c_0)\beta+(s_0^2-b^2s_{0|0})],
\end{array}\right.
\end{equation}
where $f$ is expressed by (\ref{2-4}). In this case, scalar curvature is 
$$R=-\frac{n}{4b^2}(2s^ms_m+b^2s^t_{\;m}s^m_{\;t}).$$
\end{Theorem}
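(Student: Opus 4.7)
The plan is to combine the expression for the Ricci curvature tensor of a Kropina metric (established earlier in the paper) with the definition $R = g^{ij}Ric_{ij}$, and then exploit the irreducibility of $\alpha^2$ as a polynomial in $y$. The first step is to write out $R$ explicitly as a rational function of $y$ whose denominator is a product of powers of $\alpha^2$ and $\beta$. This uses the known inverse formula for $g^{ij}$ of a Kropina metric expressed in terms of $a^{ij}$, $b^i$, $\alpha$ and $\beta$. The resulting numerator is a polynomial in $y$ whose coefficients involve ${}^\alpha Ric_{ij}$, $s_{ij}$, $r_{ij}$, their covariant derivatives, and $b^i$.

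Next, I would impose the isotropic scalar curvature assumption $R = R(x)$, which after clearing denominators by multiplying by the appropriate power $\beta^{p}\alpha^{2q}$ becomes a polynomial identity $P(x,y) = R(x)\,\beta^{p}\alpha^{2q}$ in $y$. Since $\alpha^2 = a_{ij}y^iy^j$ is irreducible in $\mathbb{R}[y]$ and coprime to the linear form $\beta$, terms on both sides must balance modulo $\alpha^2$ and modulo $\beta$ separately. Contracting the identity with $b^i b^j$ (or taking the $\beta^{\max}$ coefficient) should isolate $b^k b^l\,{}^\alpha Ric_{kl}$ and yield the first equation of (\ref{condition1}). Examining the piece of the identity divisible by the maximal power of $\beta$ compatible with the structure forces $r_{00}$ to be a scalar multiple of $\alpha^2$, giving $r_{00} = c(x)\alpha^2$. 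Substituting these two identities back and collecting the remaining terms produces the third equation, where $f$ is the particular expression defined in (\ref{2-4}).

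Finally, the closed form for $R$ is obtained by plugging the three derived conditions back into $g^{ij}Ric_{ij}$: once $r_{00} = c(x)\alpha^2$ and the $b^kb^l\,{}^\alpha Ric_{kl}$ identity are enforced, most of the $\alpha$- and $\beta$-dependent terms cancel, leaving only $-\frac{n}{4b^2}(2s^m s_m + b^2 s^t{}_m s^m{}_t)$. The main obstacle I anticipate is purely organizational: handling the large number of curvature and torsion terms produced by the expansion of $g^{ij}Ric_{ij}$, then cleanly applying the $\alpha^2$-irreducibility argument to pin down exactly which combinations of coefficients must be equated. A secondary difficulty is verifying that the three conditions in (\ref{condition1}) are not only necessary but also sufficient — this requires checking that if they hold, the $y$-dependence of $R$ genuinely drops out, which should follow by reversing the coefficient extraction carried out in the necessity direction.
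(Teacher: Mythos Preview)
Your overall strategy---compute $R=g^{ij}Ric_{ij}$, clear denominators, and exploit irreducibility of $\alpha^2$ (coprime to $\beta$) to match coefficients---is exactly what the paper does. However, the order in which you propose to extract the three conditions is inverted, and one of the steps is more delicate than you indicate.

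After clearing denominators the identity has the form $\sum_{k=0}^{5}\alpha^{2(5-k)}\Gamma_k=0$, with $\Gamma_5=-\tfrac{24(n-2)}{b^6}r_{00}^2\beta^6$. Divisibility of $\Gamma_5$ by $\alpha^2$ forces $r_{00}=c(x)\alpha^2$ \emph{first}; the first equation of (\ref{condition1}) cannot be derived before this, since it involves $c$ and $c_b$, quantities that only make sense once $r_{00}=c\alpha^2$ is known. Substituting $r_{00}=c\alpha^2$ and repeating the divisibility argument on the new bottom term yields the third equation (the one introducing $f$). The first equation of (\ref{condition1}) comes \emph{last}, and not by a single contraction: one contracts the $f$-equation separately with $a^{ij}$ and with $b^ib^j$, eliminates $f$ to obtain an auxiliary formula for $s^m{}_{|m}$, feeds everything back into the residual relation $\alpha^{2}\Theta_0+\Theta_1=0$, and then differentiates $\Theta_1=0$ in $y^i$ and contracts with $b^i$. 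Only after inserting the $s^m{}_{|m}$ identity does the $b^kb^l\,{}^\alpha Ric_{kl}$ relation appear. Your suggestion that contracting the original polynomial identity with $b^ib^j$ isolates this relation directly will not work without that intermediate step. The sufficiency direction and the final formula for $R$ (read off from $\Theta_0=0$) are as you describe.
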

\section{Preliminaries}
Let $M$ be an $n$-dimensional $C^\infty$ manifold. A Finsler structure of $M$ is a function $$F: TM \rightarrow[0, \infty)$$ with the following properties:

(1) Regularity: $F$ is $C^{\infty}$ on the slit tandent bundle $TM\backslash\{0\}$;

(2) Positive homogeneity: $F(x,\lambda y)=\lambda F(x,y)$, $\forall\lambda>0$;

(3) Strong convexity: 
$$g_{ij}(x,y):=\frac{1}{2}\frac{\partial^2F^2}{\partial y^i\partial y^j}(x,y)=\frac{1}{2}(F^2)_{y^iy^j}$$
is positive-definite at every point of $TM\backslash\{0\}$.

Let $(M,F)$ be an $n$-dimensional Finsler manifold. Suppose that $x \in M$. The geodesics of a Finsler metric $F=F(x, y)$ on $M$ are classified by the following ODEs:
$$
\frac{d^{2} x^{i}}{d t^{2}}+2 G^{i}\left(x, \frac{d x}{d t}\right)=0,
$$
where
$$
G^{i}:=\frac{1}{4} g^{i k}\left[\left(F^{2}\right)_{x^{j}y^{k}} y^{j}-\left(F^{2}\right)_{x^{k}}\right],
$$
$\left(g^{i j}\right):=\left(g_{i j}\right)^{-1}$. The local functions $G^{i}=G^{i}(x, y)$ are called geodesic coefficients (or spray coefficients).
Then the $S$-curvature, with respect to a volume form $dV=\sigma(x)dx$, is defined by $$S=\frac{\partial G^m}{\partial y^m}-y^m\frac{\partial\ln \sigma}{\partial x^m}.$$

For $x \in M$ and $y \in T_x M \backslash \{0\}$, Riemann curvature $R_{y}:=R^{i}_{\;k}(x,y) \frac{\partial}{\partial x^{i}} \otimes dx^{k}$ is defined by
$$R_{\;k}^{i}:=2 \frac{\partial G^{i}}{\partial x^{k}}-y^{j} \frac{\partial G^{i}}{\partial x^{j} \partial y^{k}}+2 G^{j} \frac{\partial G^{i}}{\partial y^{j} \partial y^{k}}-\frac{\partial G^{i}}{\partial y^{j}} \frac{\partial G^{j}}{\partial y^{k}}.$$
The trace of Riemann curvature is called Ricci curvature of $F$, i.e., $Ric:=R^k_{\;k}$.

 Riemann curvature tensor is defined by $$R^{{\ }\;i}_{j\;kl}=\frac{1}{3}\left\{\frac{\partial^2 R^i_{\;k}}{\partial y^l\partial y^j}-\frac{\partial^2 R^i_{\;l}}{\partial y^k\partial y^j}\right\}.$$
Let $\overline{Ric}_{ij}:=R^{{\ }\;k}_ {j\;kl}$. And
$$Ric_{ij}:=\frac{1}{2}\left\{\overline{Ric}_{ij}+\overline{Ric}_{ji}\right\}$$ is called Ricci curvature tensor.
The scalar curvature $R$ of $F$ is defined by $$R:=g^{ij}Ric_{ij}.$$
Let $\kappa(x)$ be a scalar function on $M$, $\theta:=\theta_i(x)y^i$ be a $1$-form on $M$. If $$R=n(n-1)\left[\frac{\theta}{F}+\kappa(x)\right],$$ then it is called that $F$ is of weak isotropic scalar curvature. Especially, when $\theta=0$, i.e., $R=n(n-1)\kappa(x)$, it is called that $F$ is of isotropic scalar curvature.

Let $F$ be  a Finsler metric on $M$. If $F=\frac{\alpha^2}{\beta}$, where $\alpha=\sqrt{a_{ij}(x)y^iy^j}$ is a Riemannian metric, $\beta=b_i(x)y^i$ is a $1$-form, then $F$ is a Kropina metric.
Its fundamental tensor $g=g_{ij}dx^i\otimes dx^j$ is given by \cite{XQL}
\begin{align*}
g_{ij}=\frac{F}{\beta}\left\{2a_{ij}+\frac{3F}{\beta}b_ib_j-\frac{4}{\beta}(b_iy_j+b_jy_i)+\frac{4y_iy_j}{\alpha^2}\right\},
\end{align*}
where $y_i:=a_{ij}y^j$.
Moreover,
\begin{align*}
g^{ij}=\frac{\beta}{2F}\left\{a^{ij}-\frac{b^ib^j}{b^2}+\frac{2}{b^2F}(b^iy^j+b^jy^i)+2\left(1-\frac{2\beta}{b^2F}\right)\frac{y^iy^j}{\alpha^2}\right\},
\end{align*}
 where $(a^{ij}):=(a_{ij})^{-1}$, $b^i:=a^{ij}b_{j}$.

 Let $\nabla\beta=b_{i|j}y^idx^j$ denote the covariant derivative of $\beta$ with respect to $\alpha$. Set
\begin{align*}
    &r_{ij}=\frac{1}{2}\left(b_{i|j}+b_{j|i}\right),\ s_{ij}=\frac{1}{2}\left(b_{i|j}-b_{j|i}\right),\ r_{00}=r_{ij}y^{i}y^{j},\ r_{i}=b^{j}r_{ij},
    \ r_{0}=r_{i}y^{i},\\
    &r^{i}=a^{ij}r_{j},\ r=b^{i}r_{i},\ {s^{i}}_{0}=a^{ij}s_{jk}y^{k},\ s_{i}=b^{j}s_{ji},
    \ s_{0}=s_{i}y^{i},\ s^{i}=a^{ij}s_{j}.
\end{align*}

The Ricci curvature of Kropina metrics is given by the following.
\begin{Lemma}[\cite{ZXL}]\label{Ric}
Let $F$ be a Kropina metrics on $M$. Then the Ricci curvature of $F$ is given by
$$Ric={}^\alpha Ric+T,$$
where ${}^\alpha Ric$ is the Ricci curvature of $\alpha$, and
$$\begin{aligned}
 T=&\frac{3(n-1)}{b^4 \alpha^4} r_{00}^2 \beta^2+\frac{n-1}{b^2 \alpha^2} r_{00 \mid 0}\beta -\frac{4(n-1)}{b^4 \alpha^2}r_{00} r_0 \beta +\frac{2(n-1)}{b^4 \alpha^2} r_{00} s_0\beta-\frac{r}{b^4} r_{00}\\
  &+\frac{r^k{ }_k}{b^2}r_{00}+\frac{2 n}{b^2} r_{k 0} s^k{ }_0+\frac{1}{b^2} b^k r_{00 \mid k}+\frac{1}{b^4} r_0^2-\frac{1}{b^2} r_{0 \mid 0}-\frac{2(2 n-3)}{b^4} r_0 s_0+\frac{n-2}{b^2} s_{0 \mid 0}\\
  &-\frac{n-2}{b^4} s_0^2-\frac{1}{b^2 \beta} r_{k 0} s^k\alpha^2-\frac{1}{b^2 \beta} r_k s^k{ }_0\alpha^2-\frac{1}{b^4 \beta} r s_0 \alpha^2 +\frac{1}{b^2\beta} r^k{ }_k s_0\alpha^2+\frac{n-1}{b^2 \beta}s^k{ }_0 s_k \alpha^2 \\
  &-\frac{1}{\beta} s^k{ }_{0 \mid k}\alpha^2+\frac{1}{b^2 \beta} b^k s_{0 \mid k}\alpha^2-\frac{1}{4 \beta^2} s^j{ }_k s^k{ }_j\alpha^4-\frac{1}{2 b^2 \beta^2} s^k s_k\alpha^4.
\end{aligned}$$
\end{Lemma}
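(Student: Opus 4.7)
The plan is to impose the isotropy condition $R=n(n-1)\kappa(x)$ on the scalar curvature $R=g^{ij}Ric_{ij}$ and convert it into a polynomial identity in $y$ that separates into the three stated equations. First, starting from Lemma \ref{Ric} which expresses $Ric$ as ${}^\alpha Ric+T$, one forms the Ricci curvature tensor $Ric_{ij}$ via the prescribed $y$-differentiation chain $R^i_{\;k}\mapsto R^{{\ }\;i}_{j\;kl}\mapsto \overline{Ric}_{ij}\mapsto Ric_{ij}$, and then contracts with the explicit formula for $g^{ij}$ from the preliminaries. The output is a rational expression of the form $R=N(x,y)/(\alpha^{2a}\beta^{2b})$ whose numerator $N$ is polynomial in $y$ with coefficients built from ${}^\alpha Ric_{ij}$, the $r$- and $s$-tensors (together with their covariant derivatives), $b^i$, and $b^2$. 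The auxiliary quantity $f$ named in the statement should arise as a specific polynomial combination of these tensors, which is precisely what (\ref{2-4}) will record.

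Second, the isotropy condition $R=n(n-1)\kappa(x)$, after clearing denominators, becomes $N(x,y)=n(n-1)\kappa(x)\,\alpha^{2a}\beta^{2b}$. Since $\alpha^2=a_{ij}y^iy^j$ is an irreducible quadratic form in $y$ at a generic point of $M$ while $\beta=b_iy^i$ is a linear form, one can separate the identity by parts divisible by $\alpha^2$ from parts that are not, and similarly by $\beta$. Reading the identity modulo $\alpha^2$ and iterating produces a hierarchy of tensorial equations on $M$. The condition $r_{00}=c(x)\alpha^2$ should emerge as the cleanest mechanism to eliminate a term like $r_{00}^2\beta^2/\alpha^4$ appearing in $T$ whose presence would otherwise obstruct the polynomial identity. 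Substituting this back reduces the remainder to two tensor equations: a scalar equation expressing $f\alpha^2$ in terms of ${}^\alpha Ric$, $\beta$, $s_0$, $c_0$, and $s_{0|0}$; and a trace-type equation obtained by contracting with $b^ib^j$, which uses ${}^\alpha R=a^{ij}\,{}^\alpha Ric_{ij}$ to yield the stated condition on $b^kb^l\,{}^\alpha Ric_{kl}$.

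Sufficiency is then verified by back-substitution: inserting the three conditions into the formula for $R$ collapses all $y$-dependence, leaving a scalar depending only on $x$, which must equal $n(n-1)\kappa(x)$ with $\kappa(x)=-\frac{1}{4(n-1)b^2}(2s^ms_m+b^2 s^t_{\;m}s^m_{\;t})$; this produces the claimed value of $R$. The main obstacle is the sheer bookkeeping: the expression for $R$ coming from Lemma \ref{Ric} is long and contains many $\beta$- and $\alpha$-rational terms, and grouping them by powers of $\alpha^2$ and $\beta$ while correctly tracking the $(n-1)$, $(n-2)$, $b^2$, and $b^4$ coefficients requires meticulous computation. A secondary subtlety is the hypothesis $n\geq 3$, which is needed precisely so that the $(n-2)$ factors appearing in the formula for $f$ and in the trace identity are invertible, allowing one to solve cleanly for the scalar data.
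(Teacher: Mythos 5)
There is a fundamental mismatch: your proposal does not prove the statement in question. The statement is Lemma \ref{Ric}, the formula $Ric={}^\alpha Ric+T$ for the Ricci curvature of a Kropina metric, yet your argument opens with ``starting from Lemma \ref{Ric} which expresses $Ric$ as ${}^\alpha Ric+T$'' and then develops the separation of $R=n(n-1)\kappa(x)$ into a polynomial identity in $y$, the divisibility argument forcing $r_{00}=c(x)\alpha^2$, the extraction of $f$, and the $b^ib^j$-contraction. All of that is the proof of Theorem \ref{miR}, not of Lemma \ref{Ric}; as a proof of the lemma itself it is circular, since the lemma is assumed as the starting point.

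A genuine proof of Lemma \ref{Ric} runs along entirely different lines: one first computes the spray coefficients $G^i$ of $F=\alpha^2/\beta$ as a deformation ${}^\alpha G^i+H^i$ of the spray coefficients of $\alpha$, with $H^i$ expressed through $r_{00}$, $s^i_{\;0}$, $s_0$, $\beta$, $\alpha^2$ and $b^2$; one then uses the identity expressing $Ric-{}^\alpha Ric$ in terms of $H^i$ and its horizontal and vertical derivatives (schematically $2H^k_{\;|k}-y^j(H^k_{\;\cdot k})_{|j}+2H^jH^k_{\;\cdot j\cdot k}-H^j_{\;\cdot k}H^k_{\;\cdot j}$) and collects terms to obtain the stated $T$. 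None of the ingredients of that computation appear in your proposal. The paper itself offers no proof either --- it cites Zhang--Shen \cite{ZXL} --- but a blind proof attempt for this lemma must carry out (or at least outline) the spray-coefficient computation rather than the isotropic-scalar-curvature analysis, which belongs to a different statement in the paper.
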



\begin{Lemma}[\cite{XQL}]\label{S=0}
Let $F$ be a Kropina metric on $n$-dimensional $M$. Then the followings are equivalent:

(\romannumeral1) $F$ has an isotropic $S$-curvature, i.e., $S=(n+1)cF$;

(\romannumeral2) $r_{00}=\sigma\alpha^2$;

(\romannumeral3) $S=0$;

(\romannumeral4) $\beta$ is a conformal form with respect to $\alpha$,\\
where $c=c(x)$ and $\sigma=\sigma(x)$ are functions on $M$.
\end{Lemma}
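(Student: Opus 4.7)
My strategy is to prove (iv) $\Leftrightarrow$ (ii) directly, (iii) $\Rightarrow$ (i) trivially, and then close the loop via (ii) $\Rightarrow$ (iii) and (i) $\Rightarrow$ (ii) using an explicit formula for the $S$-curvature of a Kropina metric. The equivalence (ii) $\Leftrightarrow$ (iv) is a reformulation of the conformal condition: $\beta$ is a conformal $1$-form with respect to $\alpha$ means $b_{i|j}+b_{j|i}=2\sigma(x)\,a_{ij}$, i.e., $r_{ij}=\sigma\,a_{ij}$; contracting with $y^iy^j$ is exactly $r_{00}=\sigma\alpha^2$, while the converse follows because the coefficients of a quadratic form in $y$ determine the underlying symmetric tensor uniquely. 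The implication (iii) $\Rightarrow$ (i) is immediate by taking $c\equiv 0$.

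The real content lies in (ii) $\Rightarrow$ (iii) and (i) $\Rightarrow$ (ii). My main tool is an explicit formula for $S$ relative to the Busemann--Hausdorff volume form. I would first compute the BH volume density, which for a Kropina metric (an $(\alpha,\beta)$-metric with $\phi(s)=1/s$) turns out to be a scalar multiple of $b^{-n}$ times the Riemannian density of $\alpha$. I would then specialize the standard $(\alpha,\beta)$-metric spray decomposition to $\phi(s)=1/s$ to get $G^i$ explicitly, and assemble $S=\partial G^m/\partial y^m-y^m\,\partial\ln\sigma/\partial x^m$. After simplification, I expect $S$ to appear as a rational expression in $\alpha$ and $\beta$ whose numerator is linear in the quantities $r_{00}$, $r_0$, $s_0$.

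With the formula in hand, (ii) $\Rightarrow$ (iii) reduces to a substitution check: inserting $r_{00}=\sigma\alpha^2$ (which also forces $r_i=\sigma b_i$, $r_0=\sigma\beta$ and $r=\sigma b^2$) should cause every term of the numerator of $S$ to cancel, yielding $S\equiv 0$ identically.

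The principal obstacle is (i) $\Rightarrow$ (ii). The equation $S=(n+1)cF=(n+1)c\,\alpha^2/\beta$ must hold as a rational identity in $y$ on every fiber $T_xM\setminus\{0\}$. After clearing denominators one obtains a polynomial identity in $y$, and I would exploit the standard divisibility fact that $\alpha^2$ (an irreducible quadratic in $y$) and $\beta$ (a linear form) share no common polynomial factor for $n\geq 2$. Matching $y$-polynomial coefficients, the identity should split into two pieces: one that pins down the direction-dependent part of $r_{00}$ and forces $r_{00}=\sigma\alpha^2$, and a residual scalar equation that in turn forces $c\equiv 0$, thereby promoting (i) all the way to (iii). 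The careful bookkeeping of how $r_0$ and $s_0$ are eliminated---using that they are no longer independent once the conformal condition is imposed---and the verification that the a priori free function $c(x)$ must vanish are the technical heart of the argument.
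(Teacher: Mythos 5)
This lemma is imported verbatim from \cite{XQL}; the paper gives no proof of its own, so there is nothing internal to compare against. Your plan --- identify the Busemann--Hausdorff density as $2^n b^{-n}\sqrt{\det(a_{ij})}$, specialize the $(\alpha,\beta)$-spray formula to $\phi(s)=1/s$, and then run the $\alpha^2$-irreducibility argument on the cleared-denominator identity --- is exactly the route taken in the cited source, and the divisibility step correctly yields both $r_{00}=\sigma\alpha^2$ and $c\equiv 0$. The one place your sketch understates the work is (ii)$\Rightarrow$(iii): the conformal condition constrains only the symmetric part $r_{ij}$, so the disappearance of the $s_0$- and $s^i_{\ 0}$-contributions to $S$ is not a formal consequence of substituting $r_{00}=\sigma\alpha^2$ but rests on a genuine cancellation between $\partial G^m/\partial y^m$ and the $-y^m\partial_{x^m}\ln\sigma_{BH}=-\tfrac{n}{b^2}(r_0+s_0)+(\text{Riemannian part})$ term, which must actually be carried out.
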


\section{Ricci curvature tensor and scalar curvature tensor of Kropina metrics}

By the definition of Ricci curvature tensor and Lemma \ref{Ric}, we obtain the Ricci curvature tensor of Kropina metrics.
\allowdisplaybreaks[2]
\begin{Proposition}
Let $F$ be a Kropina metric on an $n$-dimensional manifold $M$. Then the Ricci curvature tensor of $F$ is given by
\begin{align*}
Ric_{kl}&={}^\alpha Ric_{kl}+F_{\cdot k\cdot l}\left\{-\frac{n-5}{b^4 F^3}r_{00}^2+\frac{2}{b^4 F^2} r_{00}(s_0-2 r_0) +\frac{1}{b^2 F^2} r_{00|0}+\frac{(n-1)}{2 b^2} s_m s_0^m+\frac{(b^2 r_m^m-r) s_0}{2 b^4}\right.\\
&+\frac{(n+1)}{b^2 F} r_{0m} s_0^m+\frac{1}{2 b^2}(s_{0|m}b^m-r_m s_0^m-r_{0 m} s^m)-\frac{1}{2}\left(s_{0|m}^m+\frac{F}{b^2} s_m s^m+\frac{F}{2} s_m^t s_t^m\right)\\
&\left.-\frac{(n+1)}{b^4 F} s_0 r_0\right\}+F_{\cdot l}F_{\cdot k}\left\{\frac{3(n-5)}{b^4 F^4} r_{00}^2+\frac{4}{b^4 F^3} r_{00}(2 r_0-s_0) -\frac{2}{b^2 F^3} r_{00|0}+\frac{n+1}{b^4 F^2} s_0 r_0\right.\\
&\left.-\frac{(n+1)}{b^2 F^2} r_{0 m} s_0^m-\frac{1}{2 b^2} s^m s_m-\frac{1}{4} s_m^t s_t^m\right\}+F_{\cdot k}\left\{-\frac{6(n-3)}{b^4 F^3} r_{00} r_{l 0}+\frac{(n-7)}{b^4 F^2} r_{0l} r_0 -\frac{(n-3)}{b^4 F^2} r_{0l} s_0\right.\\
&+\frac{(n-3)}{b^4 F^2} r_{00} r_l+\frac{2}{b^4 F^2} r_{00} s_l+\frac{2 }{b^2 F^2}r_{l0|0}-\frac{(n-1)}{2 b^2 F^2} r_{00|l}+\frac{(n-1)}{2 b^2} s_m s_l^m+\frac{(b^2 r_m^m-r)}{2 b^4} s_l\\
 &-\frac{r_m s_l^m}{2 b^2}-\frac{r_l^m s_m}{2 b^2}-\frac{s_{l |m}^m}{2}+\frac{s_{l|m} b^m}{2 b^2}\left.-\frac{(n+1)}{2 b^4 F}(s_l r_0+s_0 r_l)+\frac{(n+1)}{2 b^2 F}(r_{lm} s_0^m+r_{0m} s_l^m)\right\} \\
 & +F_{\cdot l}\left\{-\frac{6(n-3)}{b^4 F^3} r_{00} r_{k 0}+\frac{(n-7)}{b^4 F^2} r_{k0} r_0-\frac{(n-3)}{b^4 F^2} r_{k0} s_0+\frac{(n-3)}{b^4 F^2} r_{00} r_k+\frac{2}{b^4 F^2} r_{00} s_k\right.\\
 &+\frac{2}{b^2 F^2} r_{k0|0}-\frac{(n-1)}{2 b^2 F^2} r_{00|k}+\frac{(n-1)}{2 b^2} s_m s_k^m+\frac{(b^2 r_m^m-r)}{2 b^4}s_k -\frac{r_m s_k^m}{2 b^2}-\frac{r_{k m} s^m}{2 b^2}-\frac{s_{k|m}^m}{2}\\
 &+\frac{s_{k|m} b^m}{2 b^2}\left.-\frac{(n+1)}{2 b^4 F}(s_k r_0+s_0 r_k)+\frac{(n+1)}{2 b^2 F}(r_{k m} s_0^m+r_{0 m} s_k^m)\right\}+\frac{8(n-2)}{b^4 F^2} r_{k 0} r_{l0}\\
 &+\frac{4(n-2)}{b^4 F^2} r_{00} r_{kl}+\frac{2(n-1)}{b^4 F} r_{kl} s_0-\frac{2(n-3)}{b^4 F} r_{k l} r_0-\frac{(3n-5)}{b^4 F}(r_{l0} r_k+r_{k0} r_l)\\
 &+\frac{(n-3)}{b^4 F}(r_{k0} s_l+r_{l0} s_k)-\frac{(3n-7)}{2 b^4}(r_k s_l+r_l s_k)-\frac{(n-2)}{b^4}s_k s_l+\frac{r_k r_l}{b^4}-\frac{2 r_{kl|0}}{b^2 F}\\
 &+\frac{(n-1)}{b^2 F}(r_{k 0|l}+r_{l 0|k})+\frac{n-2}{2 b^2}(s_{k|l}+s_{l|k})+\frac{(b^2 r_m^m-r)}{b^4} r_{k l}+\frac{b^m r_{kl|m}}{b^2}-\frac{1}{2 b^2}(r_{k|l}+r_{l|k})\\
 &+\frac{n-1}{2 b^2}(r_{km} s_l^m+r_{lm} s_k^m),
\end{align*}
where ${}^\alpha Ric_{kl}$ denotes the Ricci curvature tensor of $\alpha$.
\end{Proposition}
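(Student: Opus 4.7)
The strategy is direct computation: substitute the expression for $Ric$ from Lemma~\ref{Ric} into the definition of the symmetric Ricci tensor and differentiate in $y$. Since ${}^\alpha Ric = {}^\alpha Ric_{kl}(x)y^ky^l$ is quadratic in $y$, its contribution to $Ric_{kl}$ is simply ${}^\alpha Ric_{kl}$, the Ricci tensor of $\alpha$. All remaining work lies in propagating the definition through the correction term $T$ written out in Lemma~\ref{Ric}.

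First I would build a dictionary of the $y$-derivatives used throughout: $(\alpha^2)_{\cdot k}=2y_k$, $\beta_{\cdot k}=b_k$, the first and second derivatives of $F=\alpha^2/\beta$ (so both $F_{\cdot k}$ and $F_{\cdot k\cdot l}$), and the $y$-derivatives of the curvature data, namely $(r_{00})_{\cdot k}=2r_{k0}$, $(r_{00})_{\cdot k\cdot l}=2r_{kl}$, $(r_0)_{\cdot k}=r_k$, $(s_0)_{\cdot k}=s_k$, $(s^m{}_0)_{\cdot k}=s^m{}_k$, together with the covariant-derivative symbols such as $(r_{00|0})_{\cdot k}=2r_{k0|0}+r_{00|k}$ and $(s^m{}_{0|m})_{\cdot k}=s^m{}_{k|m}$. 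It is convenient to rewrite each mixed power of $\alpha$ and $\beta$ as a power of $F$: for instance $\beta^2/\alpha^4=F^{-2}$, $\beta/\alpha^2=F^{-1}$, $\alpha^2/\beta=F$, $\alpha^4/\beta^2=F^2$. All resulting derivatives then fall under the single template
\begin{equation*}
(F^{-p})_{\cdot k\cdot l} = -p\,F^{-(p+1)}F_{\cdot k\cdot l} + p(p+1)\,F^{-(p+2)}F_{\cdot k}F_{\cdot l},
\end{equation*}
which is precisely the source of the four structural blocks $F_{\cdot k\cdot l}$, $F_{\cdot k}F_{\cdot l}$, $F_{\cdot k}$, and $F_{\cdot l}$ visible in the statement.

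Next I would expand each of the roughly twenty summands of $T$ via the Leibniz rule applied to three kinds of factors: the $x$-dependent tensor coefficient (treated as a constant in $y$), the $F$-power, and the curvature factors (typically a product of two of $r_{00}$, $r_0$, $s_0$, $r_{k0}$, $s^k{}_0$, $r_{00|0}$, $s^m{}_{0|m}$). I would then bin the resulting monomials by structural block and add coefficients. The bin with no $F$-derivative collects the contributions from differentiating the curvature factors twice and produces the $r_{kl}$-, $s_{kl}$-, and $r_{kl|0}$-type summands; the $F_{\cdot k\cdot l}$ bin receives the terms where only the $F^{-p}$ factor is differentiated; the mixed bins $F_{\cdot k}F_{\cdot l}$, $F_{\cdot k}$, and $F_{\cdot l}$ come from the cross-differentiations of $F^{-p}$ against one or both curvature factors. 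A final symmetrization in $k\leftrightarrow l$ explains why every $F_{\cdot k}\{\ldots\}$ bracket in the statement is mirrored by a parallel $F_{\cdot l}\{\ldots\}$ bracket carrying the same interior.

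The main obstacle is combinatorial rather than conceptual: each of the twenty summands of $T$ produces up to ten intermediate contributions after two differentiations, and sorting them across the five structural blocks without losing cross-terms is error-prone (the densest case being products such as $r_{00}^2\beta^2/\alpha^4$ or $r_{00}s_0\beta/\alpha^2$, where three or four $y$-dependent factors must be differentiated simultaneously). A useful safeguard throughout is degree counting in $y$: every monomial contributing to $Ric_{kl}$ must be of total $y$-degree zero, matching the tensorial grade of ${}^\alpha Ric_{kl}$, and any slip in the exponents of $F$, $\alpha$, or $\beta$ is caught immediately.
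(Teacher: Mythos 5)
Your plan rests on the identification $Ric_{kl}=\tfrac12\,Ric_{\cdot k\cdot l}$, i.e.\ that the symmetric Ricci tensor is obtained by differentiating the scalar $Ric={}^\alpha Ric+T$ of Lemma \ref{Ric} twice in $y$. That is not the definition in force here. The paper uses the Li--Shen tensor $Ric_{kl}=\tfrac12(R^{\ m}_{k\ ml}+R^{\ m}_{l\ mk})$ built from the full Riemann curvature $R^i_{\;k}$, and this agrees with $\tfrac12\,Ric_{\cdot k\cdot l}$ only up to a correction by the symmetrized derivative of the $\chi$-curvature (equivalently, up to $\tfrac12(\chi_{k\cdot l}+\chi_{l\cdot k})$, with $\chi_i=\tfrac12\{S_{\cdot i|m}y^m-S_{|i}\}$). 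For a general Kropina metric $S\neq 0$ and $\chi\neq 0$ --- indeed the vanishing of $S$ is a \emph{consequence} of the isotropic-scalar-curvature hypothesis proved later (Theorem \ref{miS}), not something available when establishing this Proposition. So the trace $Ric$ alone does not determine $Ric_{kl}$; you also need either Xia's formula for $R^i_{\;k}$ of Kropina metrics or the $S$-curvature of Kropina metrics fed through the Li--Shen identity.

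This is not a hypothetical worry: your procedure, executed exactly as described, does not reproduce the stated formula. In the block carrying no $F$-derivatives, the only source of $r_{km}s^m_{\;l}+r_{lm}s^m_{\;k}$ under your scheme is the summand $\tfrac{2n}{b^2}r_{k0}s^k_{\;0}$ of $T$, giving coefficient $\tfrac{n}{b^2}$, whereas the Proposition has $\tfrac{n-1}{2b^2}$; likewise the only source of $r_ks_l+r_ls_k$ is $-\tfrac{2(2n-3)}{b^4}r_0s_0$, giving $-\tfrac{2n-3}{b^4}$ versus the stated $-\tfrac{3n-7}{2b^4}$. Both discrepancies equal $\tfrac{n+1}{2}$ times a universal tensor --- precisely the signature of the omitted $\chi$-term, since $S\propto(n+1)$ for Kropina metrics --- and both disappear upon contraction with $y^ky^l$, which is why your degree-counting and trace checks would not catch the error. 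Your differentiation dictionary and the bookkeeping by structural blocks are fine as far as they go, but you must add the $\chi$-contribution (or start from $R^{\ m}_{k\ ml}$ directly) before the binning step.
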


Contracting the Ricci curvature tensor with $g^{kl}$, we can get the expression of the scalar curvature $R$ of Kropina metrics as following.

\begin{Proposition}
Let $F$ be a Kropina metric on an $n$-dimensional manifold $M$. Then the scalar curvature of $F$ is given by
 \begin{align}\label{R}
 R&=-\frac{24(n-2)\beta}{b^6F^5}r_{00}^2\\
 &+\frac{1}{F^4}\left\{-\frac{(n-1)(n-8)}{b^4}r_{00}^2+\frac{40(n-2)\beta}{b^6}r_{00}r_0
  -\frac{8(n-2)\beta}{b^6}r_{00}s_0-\frac{4(n-2)\beta}{b^4}r_{00|0}\right\}\nonumber\\
 &+\frac{1}{F^3}\left\{\frac{2(n-1)}{b^2}r_{00|0}+\frac{4(n-1)}{b^4}r_{00}(s_0-2r_0)
 +\frac{2(n-2)\beta}{b^4}r_{0|0}-\frac{2(n-2)\beta}{b^4}s_{0|0}\right.\nonumber\\
 &+\frac{2(n-2)\beta}{b^4}b^mr_{00|m}+\frac{2(n-1)\beta}{b^4}r^k_0r_{k0}-\frac{4(n-2)\beta}{b^4}r_{k0}s^k_0-\frac{14(n-2)\beta}{b^6}r_0^2
 +\frac{2(n-2)\beta}{b^6}s_0^2\nonumber\\
 &\left.+\frac{2(n-3)(b^2r^m_m-r)\beta}{b^6}r_{00}+\frac{12(n-2)\beta}{b^6}r_0s_0-\frac{2(3n-5)\beta}{b^6}r_{00}r-\frac{2\beta}{b^2}{}^\alpha Ric\right\}\nonumber\\
 &+\frac{1}{F^2}\left\{-\frac{(n^2+4n-7)}{b^4}s_0r_0+\frac{(n^2+2n-1)}{b^2}r_{0m}s^m_0
 -\frac{(n-5)(b^2r^m_m-r)\beta}{b^6}r_0+\frac{3(n-1)\beta}{b^6}rr_0\right.\nonumber\\
 &+\frac{(n-1)(b^2r^m_m-r)}{b^6}\beta s_0-\frac{(2n-3)\beta}{b^4}r^m_0s_m
 -\frac{(3n-7)\beta}{b^6}rs_0-\frac{(n-2)}{b^4}s_0^2+\frac{r_0^2}{b^4}-\frac{\beta}{b^2}r^m_{m|0}\nonumber\\
 &+\frac{(n-1)}{b^2}\beta r^m_{0|m}-\frac{(2n-1)\beta}{b^4}r_{m0}r^m+\frac{(n-2)\beta}{b^4}b^ms_{0|m}-\frac{(n-2)\beta}{b^4}s_ms^m_0
 +\frac{(n-2)}{b^2}s_{0|0}\nonumber\\
 &+\frac{(b^2r^m_m-r)}{b^4}r_{00}-\frac{(n-2)\beta}{b^4}b^mr_{0|m}
 +\frac{b^mr_{00|m}}{b^2}-\frac{r_{0|0}}{b^2}+\frac{(n-2)\beta}{b^4}r_ms^m_0+{}^\alpha Ric\nonumber\\
 & \left.+\frac{\beta}{b^2}(b^ky^l+b^ly^k){}^\alpha Ric_{kl}\right\}+\frac{1}{F}\left\{\frac{(n^2-1)}{2b^2}s_ms^m_0+\frac{(n+1)(b^2r^m_m-r)}{2b^4}s_0+\frac{(n+1)s_{0|m}b^m}{2b^2}\right.\nonumber\\
 &-\frac{(n+1)}{2b^2}r_ms^m_0-\frac{(n+1)}{2b^2}r_{0m}s^m-\frac{(n+1)}{2}s^m_{0|m}-\frac{(n-2)\beta}{b^4}s_ms^m+\frac{\beta r^mr_m}{b^4}-\frac{(n-3)\beta}{2b^4}r^ks_k\nonumber\\
 &+\frac{(n-2)\beta}{2b^2}s^m_{|m}+\frac{r^m_{m}r^t_{t}}{2b^2}\beta-\frac{rr^m_m\beta}{b^4}+\frac{\beta}{2b^2}b^mr^k_{k|m}-\frac{\beta}{2b^2}r^m_{|m}
 +\frac{(n-1)\beta}{2b^2}r^k_ms^m_k\nonumber\\
 &\left.+\frac{\beta}{2 b^2}(b^2{}^\alpha R-b^kb^l{}^\alpha Ric_{kl})\right\}-\frac{n}{2b^2}s_ms^m-\frac{n}{4}s^t_ms^m_t\nonumber
\end{align}
where ${}^\alpha R$ denotes the scalar curvature of $\alpha$.
\end{Proposition}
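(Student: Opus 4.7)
The plan is to compute $R = g^{kl} Ric_{kl}$ by direct substitution, using the explicit formula for the inverse metric
$$g^{kl}=\frac{\beta}{2F}\left\{a^{kl}-\frac{b^kb^l}{b^2}+\frac{2}{b^2F}(b^ky^l+b^ly^k)+2\left(1-\frac{2\beta}{b^2F}\right)\frac{y^ky^l}{\alpha^2}\right\}$$
recorded in Section 2, and the expression for $Ric_{kl}$ just obtained in the previous Proposition. I would first split $g^{kl}$ into its four natural pieces (the $a^{kl}$-piece, the $b^kb^l$-piece, the mixed $b^ky^l+b^ly^k$-piece, and the $y^ky^l$-piece) and contract each piece separately with $Ric_{kl}$. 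The $y^ky^l$-piece collapses drastically thanks to the positive homogeneity identities $y^k F_{\cdot k}=F$ and $y^l F_{\cdot k\cdot l}=0$, so in that contraction every $F_{\cdot k\cdot l}$-block is killed and every $F_{\cdot k}F_{\cdot l}$-block becomes $F^2$ times its scalar factor; moreover, the $y^ky^l$-contraction of the no-derivative block of $Ric_{kl}$ reproduces, after use of $\alpha^2=\beta F$, precisely the expression $T$ of Lemma \ref{Ric} together with a ${}^\alpha Ric$ contribution.

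The $a^{kl}$-piece produces the genuine traces of the various blocks, and here I would precompute once for all the elementary contractions $a^{kl}F_{\cdot k}F_{\cdot l}$, $a^{kl}F_{\cdot k\cdot l}$, $a^{kl}F_{\cdot k}(\cdot)_l$ directly from $F=\alpha^2/\beta$, and then feed them into every block. The $b^kb^l$-piece yields $b^kb^l\,{}^\alpha Ric_{kl}$ plus a collection of contracted quantities that reduce via $b^k r_{k0}=r_0$, $b^k s_{k0}=s_0$, $b^k b^l r_{kl}=r$, $b^k b^l s_{kl}=0$, together with $b^k F_{\cdot k}=2-b^2F/\alpha^2\cdot\beta^{-1}\cdot\beta=\,$ (a known scalar). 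The mixed $b^ky^l+b^ly^k$-piece is then handled by the same homogeneity identities; notice that the $a^{kl}$-piece and the $b^kb^l$-piece together with the $b^ky^l+b^ly^k$-piece are exactly what combine to generate the coefficient $\frac{\beta}{2b^2}(b^2\,{}^\alpha R-b^kb^l\,{}^\alpha Ric_{kl})+{}^\alpha Ric+\frac{\beta}{b^2}(b^ky^l+b^ly^k)\,{}^\alpha Ric_{kl}$ seen in the claimed expression. After all four pieces are contracted, everything is grouped by the power $F^{-k}$ in which it appears and compared with the stated formula.

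The main obstacle is not conceptual but purely combinatorial: the expression for $Ric_{kl}$ contains on the order of forty distinct tensorial monomials distributed over the $F_{\cdot k\cdot l}$-, $F_{\cdot k}F_{\cdot l}$-, $F_{\cdot k}$-, $F_{\cdot l}$- and zero-derivative blocks, and multiplying by four pieces of $g^{kl}$ produces well over a hundred elementary contractions, each of which must be reduced using $\alpha^2=\beta F$, $b^k y_k=\beta$, $b^k b^l a_{kl}=b^2$, $b^k s_k=0$ and the homogeneity identities, and then collected according to its singularity order in $F$. To keep the risk of sign and coefficient errors under control, I would organize the calculation as a table indexed by (block of $Ric_{kl}$)$\times$(piece of $g^{kl}$), verify term by term that the coefficients of the monomials $r_{00}^2$, $r_{00}r_0$, $r_{00}s_0$, $r_{00|0}$, $r_{0|0}$, $s_{0|0}$, $s^m s_m$, $s^t{}_m s^m{}_t$, etc., match the claimed values at each power of $F^{-1}$, and only at the very end convert the remaining $\alpha^2,\beta$-quantities back to $F$ via $\alpha^2=\beta F$ in order to arrive at the stated polynomial structure in $F^{-1}$.
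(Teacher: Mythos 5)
Your proposal is correct and follows essentially the same route as the paper, which likewise obtains $R$ by directly contracting the $Ric_{kl}$ of the preceding Proposition with the explicit inverse metric $g^{kl}$ and collecting terms by powers of $F^{-1}$ via $\alpha^2=\beta F$; your decomposition of $g^{kl}$ into its four pieces and the use of the homogeneity identities $y^kF_{\cdot k}=F$, $y^lF_{\cdot k\cdot l}=0$ is exactly the natural organization of that computation. One small slip: the contraction $b^kF_{\cdot k}$ should come out as $2-\tfrac{b^2\alpha^2}{\beta^2}=2-\tfrac{b^2F}{\beta}$, not the expression you wrote, though this does not affect the overall plan.
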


\section{The proof of main theorem}
In this section, we will prove Theorem \ref{miR}.

\begin{proof}
``Necessity".
Assume Kropina metric $F$ is of scalar curvature, i.e., $R=n(n-1)\kappa(x)$.  Substituting (\ref{R}) into $R=n(n-1)\kappa(x)$ yields
\begin{align}\label{first}
\alpha^{10}\Gamma_0+\alpha^{8}\Gamma_1+\alpha^{6}\Gamma_2+\alpha^{4}\Gamma_3+\alpha^{2}\Gamma_4+\Gamma_5=0,
\end{align}
where 
\begin{align*}
\Gamma_0&=-n\left[(n-1)\kappa(x)+\frac{1}{2 b^2}s^m s_m+\frac{1}{4} s_{\;t}^m s_{\;m}^t\right],\\
\Gamma_1&=\left[\frac{(n^2-1)}{2b^2}s_ms^m_0+\frac{(n+1)(b^2r^m_m-r)}{2b^4}s_0+\frac{(n+1)s_{0|m}b^m}{2b^2}-\frac{(n+1)}{2b^2}r_ms^m_0-\frac{(n+1)}{2b^2}r_{0m}s^m\right.\\
 &\left.-\frac{(n+1)}{2}s^m_{0|m}\right]\beta+\left[\frac{1}{2 b^2}(b^2{}^\alpha R-b^kb^l{}^\alpha Ric_{kl})-\frac{(n-2)}{b^4}s_ms^m+\frac{r^mr_m}{b^4}-\frac{(n-3)}{2b^4}r^ks_k\right.\\
 &\left.+\frac{(n-2)}{2b^2}s^m_{|m}+\frac{r^m_{m}r^t_{t}}{2b^2}-\frac{rr^m_m}{b^4}+\frac{1}{2b^2}b^mr^k_{k|m}-\frac{1}{2b^2}r^m_{|m}
 +\frac{(n-1)}{2b^2}r^k_ms^m_k\right]\beta^2,\\
\Gamma_2&=\left[{}^\alpha Ric+\frac{(b^2r^m_m-r)}{b^4}r_{00}+\frac{(n^2+2n-1)}{b^2}r_{0m}s^m_0-\frac{(n^2+4n-7)}{b^4}s_0r_0+\frac{r_0^2}{b^4}\right.\\
&\left.-\frac{(n-2)}{b^4}s_0^2+\frac{(n-2)}{b^2}s_{0|0} +\frac{b^mr_{00|m}}{b^2}-\frac{r_{0|0}}{b^2}\right]\beta^2+\left[\frac{(n-2)}{b^4}r_ms^m_0-\frac{(2n-3)}{b^4}r^m_0s_m\right.\\
 &-\frac{(n-5)(b^2r^m_m-r)}{b^6}r_0+\frac{3(n-1)}{b^6}rr_0+\frac{(n-1)(b^2r^m_m-r)}{b^6} s_0 -\frac{(3n-7)}{b^6}rs_0\\
 &-\frac{(n-2)}{b^4}s_ms^m_0-\frac{(2n-1)}{b^4}r_{m0}r^m-\frac{1}{b^2}r^m_{m|0}-\frac{(n-2)}{b^4}b^mr_{0|m}+\frac{(n-1)}{b^2} r^m_{0|m}\\
 &\left.+\frac{(n-2)}{b^4}b^ms_{0|m}+\frac{2}{b^2}b^ky^l{}^\alpha Ric_{kl}\right]\beta^3,\\
\Gamma_3&=\left[\frac{2(n-1)}{b^2}r_{00|0}+\frac{4(n-1)}{b^4}r_{00}(s_0-2r_0)\right]\beta^3
 +\left[\frac{2(n-2)}{b^4}r_{0|0}-\frac{2(n-2)}{b^4}s_{0|0}\right.\\
 &+\frac{2(n-2)}{b^4}b^mr_{00|m}+\frac{2(n-1)}{b^4}r^k_0r_{k0}-\frac{4(n-2)}{b^4}r_{k0}s^k_0-\frac{14(n-2)}{b^6}r_0^2
 +\frac{2(n-2)}{b^6}s_0^2\\
 &\left.+\frac{2(n-3)(b^2r^m_m-r)}{b^6}r_{00}+\frac{12(n-2)}{b^6}r_0s_0-\frac{2(3n-5)}{b^6}r_{00}r-\frac{2}{b^2}{}^\alpha Ric\right]\beta^4,\\
\Gamma_4&=\left[-\frac{(n-1)(n-8)}{b^4}r_{00}^2+\frac{40(n-2)\beta}{b^6}r_{00}r_0
  -\frac{8(n-2)\beta}{b^6}r_{00}s_0-\frac{4(n-2)\beta}{b^4}r_{00|0}\right]\beta^4,\\
\Gamma_5&=-\frac{24(n-2)}{b^6}r_{00}^2\beta^6.
\end{align*}

By (\ref{first}), we have that $\alpha^2$ divides $\Gamma_5$. Thus there exists a scalar function $c=c(x)$ such that $r_{00}=c\alpha^2$, which is the second formula of (\ref{condition1}). Thus we deduce that
\begin{align*}
 &r_{ij}=ca_{ij};\ r_{i0}=cy_i;\ r_{ij|m}=c_ma_{ij};\ r_{i0|m}=c_my_i;\ r_{i0|0}=c_0y_i;\\
 &r_{00|k}=c_k\alpha^2;\ r_{00|0}=c_0\alpha^2;\ r^k_{\;k}=nc;\ r^k_{\;k|0}=nc_0;\ r_{i}=cb_i;\\
 &r_{0}=c\beta;\ r_{i|j}=c_jb_i+cs_{ij}+c^2a_{ij};\ r=cb^2;\ r_{i|0}=c_0b_i+cs_{i0}+c^2y_i;\\
 &r_{0|j}=c_j\beta+cs_{0j}+c^2y_j;\ r_{0|0}=c_0\beta+c^2\alpha^2;\ r^k_{\;|k}=c_b+nc^2,
\end{align*}
where $c_i=\frac{\partial c}{\partial x^i}$, $c_b=c_i b^i$,  $c_0=c_i y^i$.

Substituting the above equations into (\ref{first}) yields
\begin{align}\label{second}
\alpha^{6}\Delta_0+\alpha^{4}\Delta_1+\alpha^{2}\Delta_2+\Delta_3=0,
\end{align}
where 
\begin{align*}
\begin{split}
\Delta_0&=-n\left[(n-1)\kappa(x)+\frac{1}{2 b^2}s^m s_m+\frac{1}{4} s_{\;t}^m s_{\;m}^t\right],\\
\Delta_1&=\frac{1}{2b^4}\{b^2(b^2{\;}^\alpha R-b^kb^l{\;}^\alpha Ric_{kl})+(n+1)b^2[(n-2)c^2+c_b]\\
&-(n-2)(2s^ms_m-b^2s^m_{\;|m})\}\beta^2+\frac{(n+1)}{2b^2}[(n-1)s_ms^m_0+(n-3)cs_0+s_{0|m}b^m-b^2s^m_{0|m}]\beta,\\
\Delta_2&=\frac{(n-2)}{b^4}(c_b-3c^2)\beta^4+\frac{1}{b^4}\left\{2b^2b^ky^l{\;}^\alpha Ric_{kl}+(n-2)[2b^2c_0+b^m s_{0|m}-s^m_{\;0}s_m-5cs_0]\right\}\beta^3\\
        &+\left[{}^\alpha Ric+\frac{(n-2)}{b^4}(b^2s_{0|0}-s_0^2)\right]\beta^2,\\
\Delta_3&=\frac{2}{b^6}\{-b^4 {\;}^\alpha Ric+(n-2)[c^2\beta^2+(2cs_0-b^2c_0)\beta+(s_0^2-b^2s_{0|0})]\}\beta^4.
\end{split}\end{align*}

By (\ref{second}), we have that $\alpha^2$ divides $\Delta_3$, i.e., there exists a scalar function $f=f(x)$ such that
\begin{align}\label{2-0}
f\alpha^2=-b^4{\;}^\alpha Ric+(n-2)[c^2\beta^2+(2cs_0-b^2c_0)\beta+(s_0^2-b^2s_{0|0})].
\end{align}
Differentiating the above equation with respect to $y^i y^j$ yields
\begin{align*}
2fa_{ij}=&-2b^4 {\;}^\alpha Ric_{ij}+(n-2)[2c^2b_ib_j+2c(s_ib_j+s_jb_i)-b^2(c_ib_j+c_jb_i)\\
          &+2s_is_j-b^2(s_{i|j}+s_{i|j})].
\end{align*}
Contracting this formula with $b^ib^j$ or $a^{ij}$ yields, respectively,
\begin{align}
f&=-b^2b^ib^j {\;}^\alpha Ric_{ij}+(n-2)b^2 c^2-(n-2)b^2 c_b-(n-2)s^m s_m,\label{2-4}\\
nf&=-b^4{\;}^\alpha R+(n-2)b^2 c^2-(n-2)b^2 c_b+(n-2)(s^m s_m-b^2s_{\;|m}^m).
\end{align}
Combining the above two formulas, we get
\begin{align}\label{f}
f=\frac{b^2}{n-1}\left(b^ib^j{\;}^\alpha Ric_{ij}-b^2{\;}^\alpha R \right)+\frac{n-2}{n-1}(2s^ms_m-b^2s^m_{\;|m})
\end{align}
and
\begin{align}\label{2-6}
s^m_{\;|m}=-\frac{1}{n-2}(b^2{\;}^\alpha R-nb^ib^j{\;}^\alpha Ric_{ij})+(n-1)(c_b-c^2)+\frac{(n+1)}{b^2}s^m s_m.
\end{align}

Plugging (\ref{2-0})-(\ref{2-6}) into (\ref{second}), we obtain
\begin{align}\label{third}
\alpha^{2}\Theta_0+\Theta_1=0,
\end{align}
where 
\begin{align*}
\begin{split}
\Theta_0&=-n\left[(n-1)\kappa(x)+\frac{1}{2 b^2}s^m s_m+\frac{1}{4} s_{\;t}^m s_{\;m}^t\right],\\
\Theta_1&=\frac{(n+1)}{2b^4}\beta\left\{\left[b^2 b^kb^l{\;}^\alpha Ric_{kl}+(n-2)s^ms_m+(n-1)b^2c_b\right]\beta+(n-1)b^2s_ms^m_{\;0}\right.\\
&\left.+(n-3)b^2cs_0+b^2b^ms_{0|m}-b^4s^m_{0|m}\right\}.
\end{split}
\end{align*}
By (\ref{third}), we have
\begin{equation}(n-1)\kappa(x)=-\frac{1}{4b^2}(2s^ms_m+b^2s^t_{\;m}s^m_{\;t})\end{equation}\label{6}
and
\begin{equation}\left[b^kb^l{\;}^\alpha Ric_{kl}+(n-2)\frac{s^ms_m}{b^2}+(n-1)c_b\right]\beta+(n-1)s_ms^m_{\;0}+(n-3)cs_0+b^ms_{0|m}-b^2s^m_{0|m}=0.\end{equation}\label{6-0}
(\ref{6}) means that $R=-\frac{n}{4b^2}(2s^ms_m+b^2s^t_{\;m}s^m_{\;t})$.
Differentiating (\ref{6-0}) with respect to $y^i$ and contracting it with $b^i$ yield
\begin{equation}(n-1)c_b+b^kb^l{}^\alpha Ric_{kl}+s^m_{\;|m}+s^t_{\;m}s^m_{\;t}=0\end{equation}\label{6-1}
substituting (\ref{2-6}) into (\ref{6-1}) yields $$(n-1)(2c_b-c^2)+\frac{2(n-1)}{n-2}b^kb^l{}^\alpha Ric_{kl}-\frac{b^2}{n-2}{}^\alpha R+s^t_{\;m}s^m_{\;t}+(n+1)\frac{s^ms_m}{b^2}=0.$$

``Sufficiency". It is obviously true.

This completes the proof of Theorem \ref{miR}.
\end{proof}

\section{Other related results}
In this section, we consider the case where $s_0=0$ in Theorem \ref{miR}. 
\begin{Corollary}\label{s_0}
Let $F$ be a Kropina metric on an $n$$(\geq 3)$-dimensional manifold $M$. Assume $s_0=0$. Then $F$ is of isotropic scalar curvature if and only if
\begin{equation}\label{s0=0}
\left\{
\begin{array}{lcl}
\displaystyle b^kb^l{}^\alpha Ric_{kl}=\frac{b^2}{2(n-1)}{}^\alpha R-\frac{(n-2)}{2(n-1)}s^t_{\;m}s^m_{\;t}-\frac{(n-2)}{2}(2c_b-c^2),\vspace{1ex}\\
\displaystyle r_{00}=c(x)\alpha^2,\vspace{1ex}\\
\displaystyle f\alpha^2=-b^4{\;}^\alpha Ric+(n-2)[c^2\beta^2-b^2c_0\beta],
\end{array}\right.
\end{equation}
In this case, $R=-\frac{n}{4}s^t_{\;m}s^m_{\;t}$.
\end{Corollary}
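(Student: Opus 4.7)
My plan is to derive Corollary \ref{s_0} as a direct specialization of Theorem \ref{miR} to the case $s_0 = 0$. Since $s_0 = s_i(x) y^i$ is linear in the fiber coordinate $y$, its identical vanishing forces $s_i \equiv 0$ on $M$; consequently $s^m = a^{mi} s_i = 0$, so $s^m s_m = 0$, and covariant differentiation of the identity $s_i \equiv 0$ kills every related quantity that appears in (\ref{condition1}). In particular, $s_{i|j} \equiv 0$, so $s_{0|0} = s_{i|j} y^i y^j = 0$ and $b^m s_{0|m} = 0$. Verifying this reduction carefully is the first task, because every subsequent simplification rests on it.

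Next I would substitute these vanishings into the three equations of (\ref{condition1}). The term $-\frac{(n-2)(n+1)}{2(n-1)b^2} s^m s_m$ disappears from the first equation, producing exactly the first line of (\ref{s0=0}). The middle equation $r_{00} = c(x) \alpha^2$ is untouched. In the third equation the terms containing $2 c s_0 \beta$, $s_0^2$, and $b^2 s_{0|0}$ all vanish, leaving $f \alpha^2 = -b^4 \, {}^\alpha Ric + (n-2)[c^2 \beta^2 - b^2 c_0 \beta]$, the third line of (\ref{s0=0}). For consistency I would observe that the auxiliary identity (\ref{2-4}) defining $f$ also collapses to $f = -b^2 b^i b^j \, {}^\alpha Ric_{ij} + (n-2) b^2 (c^2 - c_b)$, so $f$ remains a well-defined scalar on $M$.

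The scalar curvature assertion follows from the same substitution applied to $R = -\frac{n}{4b^2}(2 s^m s_m + b^2 s^t_{\;m} s^m_{\;t})$, which immediately yields $R = -\frac{n}{4} s^t_{\;m} s^m_{\;t}$. The sufficiency direction is automatic: assuming (\ref{s0=0}) together with $s_0 = 0$, the three equations of (\ref{condition1}) are recovered verbatim, so Theorem \ref{miR} applies and $F$ has isotropic scalar curvature.

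I do not expect any substantive obstacle. The only delicate point is the very first one — the implication $s_0 \equiv 0 \Rightarrow s_i \equiv 0 \Rightarrow s_{i|j} \equiv 0$ — and once that is spelled out, the remainder is a term-by-term specialization of the statement of Theorem \ref{miR}.
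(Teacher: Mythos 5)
Your proposal is correct and is exactly the argument the paper intends (the paper states the Corollary without proof, as an immediate specialization of Theorem \ref{miR}): since $s_0=s_iy^i$ is linear in $y$, $s_0\equiv 0$ forces $s_i\equiv 0$, hence $s^ms_m=0$, $s_{0|0}=0$ and $b^ms_{0|m}=0$, while the uncontracted tensor $s^t_{\;m}s^m_{\;t}$ correctly survives; substituting into (\ref{condition1}) and into $R=-\frac{n}{4b^2}(2s^ms_m+b^2s^t_{\;m}s^m_{\;t})$ gives (\ref{s0=0}) and $R=-\frac{n}{4}s^t_{\;m}s^m_{\;t}$. No gaps.
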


Based on Lemma \ref{S=0} and Theorem \ref{miR}, we get the following result.
\begin{Theorem}\label{miS}
Let Kropina metric $F$ be of isotropic scalar curvature. Then $F$ is of isotropic $S$-curvature if and only if $S$=0.
\end{Theorem}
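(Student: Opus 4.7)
The plan is to extract from Theorem~\ref{miR} exactly the one equation that activates Lemma~\ref{S=0}, and then read off the biconditional from that lemma. Under the hypothesis that $F$ has isotropic scalar curvature, the second row of the characterization \eqref{condition1} in Theorem~\ref{miR} tells us that $r_{00}=c(x)\alpha^2$ for some scalar function $c=c(x)$ on $M$. This is precisely condition (\romannumeral2) of Lemma~\ref{S=0}. Hence all four conditions (\romannumeral1)--(\romannumeral4) of that lemma are in force, and in particular $S=0$ holds automatically.

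With this in hand, the biconditional is immediate. For the ``only if'' direction, suppose $F$ is of isotropic $S$-curvature, i.e., $S=(n+1)c F$ in the sense of Lemma~\ref{S=0}(\romannumeral1). The equivalence (\romannumeral1)~$\Leftrightarrow$~(\romannumeral3) of Lemma~\ref{S=0} then gives $S=0$ at once. For the ``if'' direction, $S=0$ is trivially a special case of isotropic $S$-curvature (take the scalar function to be identically zero, so that $S=(n+1)\cdot 0\cdot F$ holds).

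There is no genuine obstacle to the argument: the hard content is already packaged in Lemma~\ref{S=0}, whose equivalence (\romannumeral1)~$\Leftrightarrow$~(\romannumeral3) holds for any Kropina metric. The sole role of Theorem~\ref{miR} here is to guarantee, through the identity $r_{00}=c(x)\alpha^2$, that the hypotheses of Lemma~\ref{S=0} are automatically in place under isotropic scalar curvature, so that ``isotropic $S$-curvature'' collapses to the strict vanishing $S=0$. One could even phrase the conclusion more strongly: under isotropic scalar curvature, $S=0$ is forced unconditionally; the theorem's biconditional then holds in the trivial sense that both sides are true.
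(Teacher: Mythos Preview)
Your proof is correct and follows exactly the same route as the paper: invoke Theorem~\ref{miR} to obtain $r_{00}=c(x)\alpha^2$, then apply the equivalences of Lemma~\ref{S=0}. Your additional remark that $S=0$ is in fact forced unconditionally under isotropic scalar curvature (so the biconditional holds trivially) is a valid sharpening that the paper leaves implicit.
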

\begin{proof}
Assume that $F$ is of isotropic scalar curvature. By Theorem \ref{miR}, we know that $r_{00}=c\alpha^2$. By Lemma \ref{S=0},  the result is obviously true.
\end{proof}


 \begin{Lemma}[\cite{L-S}]\label{chi=0}
    For a Finsler metric or a spray on a manifold $M$, $R^{{\ }m}_{i\;mj}=R^{{\ }m}_{j\;mi}$ if and only if $\chi_i=0$.
 \end{Lemma}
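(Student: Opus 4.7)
The plan is to verify the equivalence by direct computation from the definition of $R^{{\ }\;i}_{j\;kl}$ in terms of the Riemann curvature $R^i_{\;k}$. First I would substitute
\[
R^{{\ }\;i}_{j\;kl}=\frac{1}{3}\left\{\frac{\partial^2 R^i_{\;k}}{\partial y^l\partial y^j}-\frac{\partial^2 R^i_{\;l}}{\partial y^k\partial y^j}\right\}
\]
into each of the contractions $R^{{\ }\;m}_{i\;mj}$ and $R^{{\ }\;m}_{j\;mi}$. A straightforward relabeling of the free indices gives
\[
R^{{\ }\;m}_{i\;mj}=\frac{1}{3}\left\{\frac{\partial^2 R^m_{\;m}}{\partial y^j\partial y^i}-\frac{\partial^2 R^m_{\;j}}{\partial y^m\partial y^i}\right\},\qquad R^{{\ }\;m}_{j\;mi}=\frac{1}{3}\left\{\frac{\partial^2 R^m_{\;m}}{\partial y^i\partial y^j}-\frac{\partial^2 R^m_{\;i}}{\partial y^m\partial y^j}\right\}.
\]

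Next I would subtract the two expressions. Because $R^m_{\;m}=\mathrm{Ric}$ has a Hessian in $y$ that is automatically symmetric in $(i,j)$, the two $R^m_{\;m}$ terms cancel exactly, and what survives is the purely antisymmetric combination
\[
R^{{\ }\;m}_{i\;mj}-R^{{\ }\;m}_{j\;mi}=\frac{1}{3}\left\{\frac{\partial^2 R^m_{\;i}}{\partial y^m\partial y^j}-\frac{\partial^2 R^m_{\;j}}{\partial y^m\partial y^i}\right\}.
\]
The concluding step is to identify this right-hand side, up to a nonzero constant, with the $\chi$-curvature as defined in Li-Shen's paper. The point is that $\chi_i$ is constructed precisely to capture the failure of the trace $\overline{Ric}_{ij}=R^{{\ }\;k}_{j\;ki}$ to be symmetric in $(i,j)$, so the antisymmetric object above vanishes identically in $(i,j)$ if and only if $\chi_i=0$, giving both directions at once.

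The main obstacle is purely notational: one has to match the normalization and sign conventions in Li-Shen's definition of $\chi_i$ and verify that the antisymmetric part of the doubly-contracted curvature does not accidentally vanish by some other identity (for example a trace relation arising from the homogeneity identities $y^k R^i_{\;k}=0$). Once this bookkeeping is complete both directions of the equivalence are immediate; and since the computation uses only the spray-level definition of $R^{{\ }\;i}_{j\;kl}$ and never invokes the fundamental tensor $g_{ij}$, the argument is valid both for a Finsler metric and for a general spray, as claimed.
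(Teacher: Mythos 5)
The paper offers no proof of this lemma---it is imported verbatim from Li--Shen \cite{L-S}---so there is nothing in-paper to compare against; your proposal must stand on its own. Its opening computation is correct: relabelling indices in the definition of $R^{{\ }\;i}_{j\;kl}$ and using the symmetry of the $y$-Hessian of $\mathrm{Ric}=R^m_{\;m}$ does reduce the question to whether $R^m_{\;i\cdot m\cdot j}-R^m_{\;j\cdot m\cdot i}$ vanishes.

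The concluding step, however, hides the two points that are the actual content of the lemma. First, the surviving quantity carries two free indices and is antisymmetric in them, so it cannot be ``the $\chi$-curvature up to a nonzero constant''; and $\chi_i$ is not ``constructed to capture the failure of symmetry of $\overline{Ric}_{ij}$''---in this paper and in \cite{LiShen} it is defined from the $S$-curvature by $\chi_i=\frac{1}{2}\{S_{\cdot i|m}y^m-S_{|i}\}$. The substantive input you are omitting is the Bianchi-type identity $\chi_i=-\frac{1}{6}\bigl(2R^m_{\;i\cdot m}+R^m_{\;m\cdot i}\bigr)$ (up to sign conventions), which converts the $S$-curvature definition into curvature data; only with it does one obtain $R^{{\ }\;m}_{i\;mj}-R^{{\ }\;m}_{j\;mi}=\chi_{j\cdot i}-\chi_{i\cdot j}$. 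Second, even granting that identity, symmetry of $\overline{Ric}_{ij}$ gives only $\chi_{i\cdot j}=\chi_{j\cdot i}$, which does not by itself force $\chi_i=0$, since $\chi_{i\cdot j}$ still depends on $y$. One needs the further identity $\chi_iy^i=0$ (a consequence of $S_{\cdot i}y^i=S$, or of $R^m_{\;i}y^i=0$): differentiating it gives $\chi_{i\cdot j}y^i=-\chi_j$, while Euler's theorem for the degree-one homogeneous $\chi_j$ gives $\chi_{j\cdot i}y^i=\chi_j$, so symmetry of $\chi_{i\cdot j}$ yields $\chi_j=-\chi_j=0$. Without these two ingredients the ``only if'' direction does not follow, so as written the argument has a genuine gap rather than being a matter of bookkeeping.
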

\begin{Remark}
Li-Shen defined $\chi=\chi_idx^i$ with the $S$-curvature in \cite{LiShen}, where $\chi_i:=\frac{1}{2}\{S_{.i|m}y^m-S_{|i}\}.$ Based on  Theorem \ref{miS}, we know that $\chi_i$ of Kropina metric with isotropic scalar curvature vanishes, i.e., $R^m_{i\;mj}=R^m_{j\;mi}$. This means that $Ric_{ij}=\overline{Ric}_{ij}$.
\end{Remark}

\section{Conclusions}
In this paper, we study the Kropina metric with isotropic scalar curvature. Firstly, we obtain the expressions of Ricci curvature tensor and scalar curvature. Based on these, we characterize Kropina metrics with isotropic scalar curvature by tensor analysis in Theorem \ref{miR}. In Corollary \ref{s_0}, we discuss the case where $s_0=0$. Kropina metrics with isotropic scalar curvature are deserved further study by the navigation method.

\vspace{6pt}




Liulin Liu

  College of Mathematics and Systems Science,

  Xinjiang University,

  Urumqi, Xinjiang Province, 830046, China,

  Email: 107552000443@stu.xju.edu.cn\\

Xiaoling Zhang

  College of Mathematics and Systems Science,

  Xinjiang University,

  Urumqi, Xinjiang Province, 830046, China,

  Email: zhangxiaoling@xju.edu.cn\\
  
Lili Zhao
  
  School of Mathematical Sciences, 
  
  Shanghai Jiao Tong University, 
  
  Shanghai 200240, China,
  
  Email: zhaolili@sjtu.edu.cn\\

\end{document}